\date{Nov.~5, 2013}
\newtheorem{dummy}{anything}[section]
\newtheorem*{thma}{Theorem A}
\newtheorem*{corc}{Corollary C}
\newtheorem*{corb}{Corollary B}
\newtheorem{lemma}[dummy]{Lemma}
\newtheorem{proposition}[dummy]{Proposition}
\newtheorem{condition}{Condition}
\theoremstyle{definition}
\newtheorem{definition}[dummy]{Definition}
  \newtheorem{example}[dummy]{Example}
  \newtheorem{remark}[dummy]{Remark}
  \newtheorem*{acknowledgement}{Acknowledgement}
\newcommand
{\eqncount}{\setcounter{equation}{\value{dummy}}%
\addtocounter{dummy}{1}}
\newcommand{\bZ}{\mathbb Z}
\newcommand{\CP}{\mathbb{CP}}
\newcommand{\bbZ}{\mathbb Z}
\newcommand{\HH}{V}
\DeclareMathOperator{\Homeo}{Homeo}
\DeclareMathOperator{\Hom}{Hom}
\DeclareMathOperator{\sign}{sign}
\newcommand{\la}{\langle}
\newcommand{\ra}{\rangle}
\newcommand{\BSpin}{BSpin}
\DeclareMathOperator{\Spin}{Spin}
\newcommand{\bd}{\partial}
\newcommand{\cy}[1]{\bZ/{#1}}
\begin{document}

\title{Recognizing products of surfaces and simply connected $4$-manifolds}
\author{Ian Hambleton}

\address{Department of Mathematics, McMaster University,  Hamilton, Ontario L8S 4K1, Canada}

\email{hambleton@mcmaster.ca }

\author{Matthias Kreck}
\address{Mathematisches Institut, Universit\"at Bonn, 
D-53115 Bonn, Germany}

\email{kreck@math.uni-bonn.de}

\thanks{Research partially supported by NSERC Discovery Grant A4000.
The authors would like  to thank the Max Planck Institut f\"ur Mathematik in Bonn for its hospitality and support while this work was done.}

\begin{abstract}  We give necessary and sufficient conditions for a closed smooth  $6$-manifold $N$ to be diffeomorphic to a product of a surface $F$ and a simply connected $4$-manifold $M$ in terms of basic invariants like the fundamental group and cohomological data. Any isometry of the intersection form of $M$ is realized by a self-diffeomorphism of $M \times F$.
\end{abstract}

\maketitle

\section{Introduction}
\label{sect:  introduction}
Simply-connected closed $6$-manifolds were classified by Wall \cite {wall-6mflds}, Jupp \cite{jupp1}, and \v{Z}ubr \cite{zubr1}. However, if the fundamental group is non-trivial, such complete information is not within reach of current techniques except in special cases. 

In this paper we consider the following problem:  given a closed, oriented $6$-manifold $N$, can we identify a closed, oriented surface $F$ and a simply-connected, closed $4$-manifold $M$ such that $N$ is diffeomorphic to $M \times F$~? Since simply connected $6$-manifolds are already classified, we assume from now on that $F \neq S^2$ has genus $\geq 1$, but the results remain true in the simply connected case. 
First we discuss some of the necessary conditions. 

\begin{condition}\label{one} The fundamental group $\pi_1(N)$ is isomorphic to the fundamental group of a closed, oriented surface $F$. 
\end{condition}
We choose a base-point preserving  classifying map $u\colon N \to F$ for the universal covering. Up to homotopy and choice of base points this is equivalent to choosing an isomorphism $\alpha\colon  \pi_1(N) \to \pi_1(F)$, where $u_\# = \alpha$. 

The next condition concerns the second homology group of the universal covering $H_2(\widetilde N)$, which for the product of $F$ with a simply connected $4$-manifold $M$ is a trivial module over $\pi_1(N)$ and so we require this: 

\begin{condition}\label{two} $H_2(\widetilde N)$ is  a trivial $\pi_1(N)$-module.
\end{condition}

Under this condition,  the Serre spectral sequence for the fibration over $F= K(\pi_1(N),1)$ with fibre $\widetilde  N$, implies 
 that we have an exact sequence 
$$
0 \to H^2(F) \xrightarrow{u^*} H^2(N) \xrightarrow{p^*} H^2(\widetilde N) \to 0,
$$
where $p$ is the universal covering projection. It follows that $H_2(\widetilde N)$ is a finitely-generated free abelian group.

The key to our recognition result is the observation that  the cohomology algebra of $N$ provides a  candidate for the intersection form of a closed, simply-connected topological $4$-manifold $M$. 
To identify this candidate, suppose that  Condition \ref{one} holds. Then the trilinear cup product form on $H^2(N)$ induces a well-defined symmetric bilinear form
 $$
I(N):  H^2(N)/{u^*H^2(F)} \times H^2(N)/{u^*H^2(F)} \to \mathbb Z
$$
by mapping $x$ and $y$ to $\la u^*([F]) \cup x \cup y , [N]\ra$, where $[F]\in H^2(F)$ is the cohomology fundamental class. If Condition \ref{two} holds, then 
$V:= H^2(N)/u^*H^2(F)\cong H^2(\widetilde N)$ is a finitely-generated free abelian group.

Under the assumption that $N \approx M \times F$, this form $I(N)$ is 
\emph{unimodular} and the finitely-generated free abelian group
$V$  is isomorphic to $H^2(M)$. Moreover, the form $I(N)$ and the intersection form 
$$s_M\colon H^2(M)\times H^2(M) \to \bbZ$$
are isometric.
We recall that vanishing of the Kirby-Siebenmann invariant of $M$, denoted $KS(M)$ is  a necessary and sufficient condition for $M \times F$ to be smoothable 
 (see \cite[Theorem 5.14, p.~318]{kirby-siebenmann1}).  If $M$ is a spin manifold, then this condition is assured by requiring $\sign(M) \equiv 0 \pmod{16}$. 
Thus we have:

\begin{condition}\label{three-new} The symmetric bilinear form $I(N)$ is unimodular, and $\sign I(N) \equiv 0 \pmod{16}$ if $N$ is a spin manifold. 
\end{condition}

If $I(N)$ is unimodular there exists a closed, simply-connected topological $4$-manifold $M$ with this intersection form, by the foundational results of Freedman \cite[Theorem 1.5]{freedman2}.  If $M$ is non-spin, then Freedman shows that we may assume $KS(M) = 0$. In either case, if $KS(M) = 0$ the manifold $M$ is \emph{uniquely} determined (up to homeomorphism) by its intersection form $s_M$. Moreover, the smooth structures on $M\times F$ are determined by lifts of its stable topological tangent bundle $\tau_{M\times F}$ (see \cite[Theorem 10.1, p.~194]{kirby-siebenmann1} for the precise statement).

\begin{definition} The \emph{standard smooth structure} on $M \times F$ is the one determined by product of the unique lift of $\tau_M \colon M \to BTOP$ to $BO$, together with  $\tau_F\colon F \to BO$. The lift of $\tau_M$ is unique because $TOP/O \simeq TOP/PL = K(\bbZ/2,3)$ in this range of dimensions.
\end{definition}

We then fix the standard smooth structure on $M\times F$ and take the product orientation with respect to given orientations on $M$ and $F$. This is our candidate for recognizing $N$ as the product $M \times F$.

Finally, we need some more information about the \emph{oriented} integral cohomology ring of $N$ and the Pontrjagin class $p_1(N) \in H^4(N)$. 
Let $q_1\colon M\times F \to M$ and $q_2\colon M \times F \to F$ denote the first and second factor projection maps. Note that the integral cohomology of $M \times F$ is $\bbZ$-torsion free, so any map $H^*(M \times F) \to H^*(N)$ of integral cohomology rings reduced mod 2  induces a map on $\cy2$-cohomology.

\begin{condition}\label{three}  Let $M$ be a closed, oriented, simply-connected topological $4$-manifold with $s_M \cong I(N)$ and $KS(M) = 0$.
There exists an isomorphism
$$\phi\colon H^*(M \times F) \to H^*(N)$$
of oriented integral cohomology rings. We assume that 
\begin{enumerate}
\addtolength{\itemsep}{0.2\baselineskip}
\item $\phi([M]\times [F]) = [N] \in H^6(N)$, 
\item $\phi\circ q_2^* = u^*\colon H^*(F) \to H^*(N)$, and 
\item $\phi$ preserves the second Stiefel-Whitney class:
$$ \phi(w_2(M\times F)) = w_2(N) \in H^2(N;\cy 2).$$
\item Moreover, the relation
$$\la \phi(x) \cup p_1(N), [N]\ra = \begin{cases}  3 \sign(M) & \text{\ if $x = q_2^*([F]) \in H^2(M\times F)$},\cr 0 & \text{\ if $x = q_1^*(y)$}\cr\end{cases}$$
holds for all $y \in H^2(M)$. 
\end{enumerate}
\end{condition} 
\begin{example} Unless $M = S^4$, the cohomology cohomology ring determines the Steenrod operations, and so $\phi$ preserves the 
second Stiefel-Whitney class. On the other hand, consider an oriented $4$-sphere bundle $N$ over $F$ with $w_2(N) \neq 0$. Then $N$ has the same cohomology ring as $S^4 \times F$ but is not diffeomorphic to the product. 
\end{example}

Now we are ready to formulate our main result.

\begin{thma} Let $N$ be a closed, oriented smooth $6$-manifold, and $\alpha\colon \pi_1(N) \cong \pi_1(F)$ for some closed, oriented surface $F$, such that Conditions  \textup{\ref{one}}-\textup{\ref{three-new}} hold. Suppose that  
\begin{enumerate}
\item  $M$ is the closed, simply-connected topological $4$-manifold $M$,  such that $s_M\cong I(N)$, with
  $KS(M) = 0$, and
\item  $\phi\colon H^*(M \times F) \xrightarrow{\approx} H^*(N)$ is 
 a ring isomorphism satisfying Condition \textup{\ref{three}}. 
\end{enumerate}
Then, there is an orientation and base-point preserving diffeomorphism $f\colon N \to M \times F$ such that $f_{\#} = \alpha$ and $f^* = \phi$.
\end{thma}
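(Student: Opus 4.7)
The plan is to first build a homotopy equivalence $f_0\colon N \to M\times F$ realizing the prescribed data $(\alpha,\phi)$, and then to upgrade it to a diffeomorphism through surgery. Take $u\colon N \to F$ as the $F$-component. For the $M$-component, present $M$ up to homotopy as $(\bigvee_k S^2) \cup_\eta e^4$, with $k = \mathrm{rank}\, H^2(M)$ and $\eta \in \pi_3(\bigvee_k S^2)$ determined by $s_M$. Choose classes $x_i \in H^2(N)$ whose images in $V = H^2(\widetilde N)$ correspond, via $\phi^{-1}$, to the dual basis of $H^2(M)$. These classify a map $N \to K(\bbZ^k,2)$, and the obstruction to compressing it through the inclusion $M \hookrightarrow \prod \CP^\infty$ is detected on $H^4(N)$ by the cup-product relations $x_i \cup x_j = s_M(e_i,e_j)\cdot \phi(q_1^*[M])$; these hold because $\phi$ is a ring map and $I(N)\cong s_M$. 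This yields $g\colon N \to M$, and we set $f_0 = (g,u)$, so that $f_0^* = \phi$ by construction. The ring isomorphism $\phi$, together with Conditions \ref{one} and \ref{two}, implies via Whitehead's theorem applied to the universal covers that $f_0$ is a homotopy equivalence.

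Since $\mathrm{Wh}(\pi_1(F)) = 0$ for surface groups (Farrell--Hsiang/Waldhausen), $f_0$ is automatically a simple homotopy equivalence. Moreover, Condition \ref{three}(iii) identifies $w_2$, and Condition \ref{three}(iv) together with the Hirzebruch signature formula $\la p_1(M),[M]\ra = 3\sign(M)$ gives $f_0^*p_1(M\times F) = p_1(N)$. Because the stable normal bundle of a closed oriented $6$-manifold is detected, as a lift from $\tau$, in the relevant Postnikov stage of $BO$ by $w_2$ and $p_1$, we obtain a stable bundle isomorphism covering $f_0$, turning it into a degree-one normal map with the same tangential invariants as $\mathrm{id}_{M\times F}$.

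The main obstacle is the final step: promoting $f_0$ to a diffeomorphism. I would apply Kreck's modified surgery with the normal $2$-type $\xi\colon B \to BO$ of $M\times F$, which has the homotopy type of $F \times \BSpin$ in the spin case, or an analogous $w_2$-twisted pullback otherwise. Both $N$ and $M\times F$ admit normal $2$-smoothings over $B$ by the previous paragraph. One shows $[N] = [M\times F] \in \Omega_6(B,\xi)$ by comparing characteristic numbers, the crucial ones being $\sign(\widetilde N) = \sign(M)$ (from Condition \ref{three-new}) and $\la u^*[F]\cup p_1(N),[N]\ra = 3\sign(M)$ (from Condition \ref{three}(iv)), with Stiefel--Whitney and Euler numbers controlled by the cohomology ring. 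Kreck's theorem then yields a $B$-bordism $W^7$ on which surgery below the middle dimension produces a $3$-equivalence $W \to B$; the remaining obstruction in $L^s_7(\bbZ[\pi_1(F)])$ is killed by precomposing $f_0$ with a suitable self-diffeomorphism of $M\times F$, exploiting the realization of isometries of $s_M$ highlighted in the abstract. The smooth $s$-cobordism theorem then delivers the desired diffeomorphism $f\colon N \to M\times F$ with $f^* = \phi$ and $f_\# = \alpha$.
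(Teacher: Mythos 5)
Your overall strategy in the final paragraph (normal $2$-type, bordism of normal $2$-smoothings, Kreck's modified surgery, obstruction in $l_7/L_7$) is indeed the paper's route, but there are genuine gaps. First, the opening step --- constructing a homotopy equivalence $f_0\colon N\to M\times F$ --- is both unjustified and unnecessary. The compression of $N\to K(\bbZ^k,2)$ through $M$ has obstructions not only in $H^4(N;\pi_3(M))$ but also in $H^5(N;\pi_4(M))$ and $H^6(N;\pi_5(M))$, which your cup-product argument does not touch; and even granting a map $g$, Whitehead's theorem applied to universal covers would require knowing $H_3(\widetilde N)=0$ and $H_4(\widetilde N)\cong\bbZ$, which the hypotheses do not provide (the paper explicitly stresses that nothing about $\pi_3(N)$ is assumed). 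The whole point of the modified surgery method is that one only needs $3$-equivalences to the normal $2$-type $B$, i.e.\ normal $2$-smoothings, never a homotopy equivalence between the manifolds. Relatedly, your normal $2$-type is misidentified: it must record $\pi_2$, so it is $K(H,2)\times F\times \BSpin$ (with a twisted map to $BSO$ in the odd case), not $F\times\BSpin$. As a consequence the bordism comparison needs more than signature and $\la u^*[F]\cup p_1(N),[N]\ra$: one must also match the images of $[N]$, $[N]\cap u^*[F]$ and the dual of $p_1$ in $H_6$, $H_4$, $H_2$ of $K(H,2)$, and --- the technical heart of the paper --- prove that these invariants actually detect $\Omega_6(B,\xi)$ (torsion-freeness via Gysin sequences and the Atiyah--Hirzebruch spectral sequence). "Comparing characteristic numbers" is not enough without that detection statement.

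Second, the endgame fails as stated. Killing the obstruction in $L_7(\bbZ[\pi_1(F)])$ by precomposing with a self-diffeomorphism of $M\times F$ realizing an isometry of $s_M$ is circular --- that realization is Corollary B, which the paper deduces from Theorem A --- and in any case there is no argument that such self-diffeomorphisms move $\theta(W)$: by Cappell's computation the relevant part of $L_7(\bbZ[\pi_1(F)])$ consists of codimension-one (Arf-type) classes attached to the $1$-handles of $F$, with no visible relation to isometries of $s_M$. The paper's mechanism is different: first, Kreck's Proposition 8 applies because the $\Lambda$-valued intersection form on $\pi_3(M\times F)$ vanishes identically (since $\Hom_{\bbZ G}(\bbZ,\bbZ G)=0$ for infinite $G$), which places $\theta(W)$ in the ordinary $L$-group rather than merely the monoid $l_7$; then Cappell's Theorem 18 provides a closed $7$-manifold with $B$-structure whose disjoint union with $W$ kills the obstruction, after which the $s$-cobordism theorem (using $Wh(\pi_1 F)=0$) finishes the proof. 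Without these two ingredients --- the elementary-versus-$L$-group step and the realization of $L_7$ by closed $B$-manifolds --- your final step does not go through.
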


We can also ask which automorphisms of the second cohomology of $M \times F$ are induced by self-diffeomorphisms. In particular, we consider automorphisms of $H^2(M)$, and extend them by the identity on $H^2(F)$ via the identification:
$$(q_1^* , q_2^* )\colon H^2(M) \oplus H^2(F) \cong  H^2(M \times F).$$
 From the ring structure in cohomology, a necessary condition is that the automorphism on $H^2(M)$ is an isometry of the intersection form.

\begin{corb} Let $M$ be a closed topological $4$-manifold with $KS(M)=0$ and $F$ a closed oriented surface. Then each isometry of the intersection form of $M$  is induced by a self-diffeomorphism of $ M \times F$.
\end{corb}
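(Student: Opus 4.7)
The plan is to apply Theorem A to $N := M \times F$ itself. Given an isometry $\sigma \colon (H^2(M), s_M) \to (H^2(M), s_M)$, first extend it to an orientation-preserving ring automorphism $\tilde\sigma$ of $H^*(M)$ by taking $\tilde\sigma = \mathrm{id}$ on $H^0$ and $H^4$; this is multiplicative exactly because the only non-trivial cup product on $H^*(M)$ is the pairing $H^2 \otimes H^2 \to H^4 \cong \bbZ$, which is $s_M$ under the orientation isomorphism, and $\sigma$ preserves $s_M$. Via the K\"unneth decomposition $H^*(M \times F) \cong H^*(M) \otimes H^*(F)$ set
$$\phi := \tilde\sigma \otimes \mathrm{id}_{H^*(F)},$$
an orientation-preserving ring automorphism whose restriction to the $H^2(M)$-summand of $H^2(M \times F)$ equals $\sigma$ and which is the identity on the $H^2(F)$-summand.

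Next I would verify the hypotheses of Theorem A for $N = M \times F$, $\alpha := (q_2)_\#$, and $\phi$. Conditions \ref{one}, \ref{two}, and \ref{three-new} hold automatically: $\pi_1(N) = \pi_1(F)$ since $M$ is simply connected; the universal cover $M \times \widetilde F$ retracts onto $M$ (as $\widetilde F$ is contractible when $F$ has genus $\geq 1$), so $H_2(\widetilde N) \cong H_2(M)$ as a trivial $\pi_1$-module; and $I(N) = s_M$ is unimodular, with $\sign(s_M)\equiv 0 \pmod{16}$ whenever $N$ is spin. In Condition \ref{three}, parts (1) and (2) are immediate from the construction of $\phi$. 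Part (3) reduces to the mod-$2$ identity $\sigma(w_2(M)) \equiv w_2(M)$, which follows from Wu's formula: $w_2(M) \bmod 2$ is characterized by $\la x \cup x, [M]\ra \equiv \la w_2(M) \cup x, [M]\ra \pmod 2$ for every $x \in H^2(M;\cy 2)$, a condition preserved by the mod-$2$ reduction of any isometry of $s_M$.

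Part (4) is the main computation. Since $p_1$ of any surface vanishes, $p_1(M \times F) = q_1^* p_1(M)$. For $x = q_2^*([F])$ the pairing evaluates to $\la p_1(M), [M]\ra = 3\sign(M)$ by Hirzebruch's signature theorem; for $x = q_1^*(y)$ the cup product $q_1^*(\sigma(y) \cup p_1(M))$ lies in $q_1^* H^6(M) = 0$, so the evaluation vanishes. Theorem A then yields a self-diffeomorphism $f \colon M \times F \to M \times F$ with $f^* = \phi$; in particular $f^*$ restricts to $\sigma$ on $H^2(M)$, which is the required realization.

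The step to watch is Condition \ref{three}(3): it is not a priori obvious that an isometry of the integral form $s_M$ fixes a chosen integral lift of $w_2(M)$, but the condition is phrased modulo $2$, where the Wu-formula characterization of the mod-$2$ Wu class makes invariance automatic. Everything else is a formal consequence of the product structure $\phi = \tilde\sigma \otimes \mathrm{id}$ together with $p_1(F) = 0$ and Hirzebruch's theorem, so once the ring-theoretic setup is in place the result is essentially a direct application of Theorem A.
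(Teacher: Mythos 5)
Your proposal is correct and is essentially the paper's own argument: extend the isometry by the identity to a ring automorphism $\phi$ of $H^*(M\times F)$ and apply Theorem A to $N = M\times F$. The paper states this in two lines, while you additionally verify Conditions \ref{one}--\ref{three} explicitly (the Wu-class argument for $w_2$ and the $p_1$ computation), which is exactly what the paper leaves implicit.
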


\begin{proof}  There is  an automorphism $\phi$ of $H^*(M \times F)$, which on $H^2(M\times F)$ is the given isometry on $H^2(M)$ extended by the identity on $H^2(F)$. By Theorem A there is a self-diffeomorphism of $M \times F$ inducing $\phi$, and therefore the given isometry on $H^2(M)$. 
\end{proof}

\begin{remark} 
 In the case where $M$ is itself smooth, Donaldson theory (see \cite[Theorem 6]{friedman-morgan1}) provides  examples of isometries of $H^2(M)$ which cannot be realized by  self-diffeomorphisms of $M$. 
We also remark that an alternate argument can be given for Corollary B by using further results of Freedman and Kirby-Siebenmann. By \cite[p.~371]{freedman2} there is a homeomorphism $h\colon M\to M$ realizing the given isometry. Consider the $s$-cobordism 
$$W^5 := (M\times I) \cup_h (M\times I)$$
 obtained by gluing two cylinders $M \times I$ via $h$. 
Since $H^4(W, \bd W;\bbZ/2)= H^3(M;\bbZ/2) =0$, we can pick a lift of $\tau_W\colon W \to BO$ extending the standard lift of $\tau_M$ on both boundary components. 
 Taking the product of the $s$-cobordism with $F$,  we obtain an $s$-cobordism $W \times F$ with the product lift of $\tau_{W\times F}$ over $BO$. 
By Kirby-Siebenmann \cite[Theorem 10.1, p.~194]{kirby-siebenmann1}  there is a smooth structure on $W\times F$ which restricts to the standard smooth structure on both ends. 
 The $s$-cobordism theorem then gives a self-diffeomorphism of the standard smooth structure on
 $M\times F$,  realizing the given isometry. 
\end{remark}

Finally, we note that the smooth structure on $M\times F$ is actually \emph{unique} up to diffeomorphism. 

\begin{corc} Let $M$ be a closed, simply-connected topological $4$-manifold with $KS(M)=0$, and let $F$ be a closed oriented surface. Then $M\times F$ has a unique smooth structure.
\end{corc}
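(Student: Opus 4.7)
The plan is to apply Theorem~A to an arbitrary smooth $6$-manifold $N$ that is homeomorphic to $M \times F$; this would produce an orientation-preserving diffeomorphism from $N$ to the standard smooth structure on $M\times F$, establishing uniqueness of the smooth structure.

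First I would fix an orientation-preserving homeomorphism $h \colon N \to M\times F$ and set $\alpha := (q_2 \circ h)_\# \colon \pi_1(N) \to \pi_1(F)$. Condition~\textup{\ref{one}} is then immediate. Since $F$ has positive genus its universal cover is contractible, so $\widetilde N \simeq M$, making $H_2(\widetilde N) \cong H_2(M)$ a trivial $\pi_1(N)$-module and verifying Condition~\textup{\ref{two}}. Because $I(N)$ is manufactured out of the cup product ring alone, $h^*$ is an isometry from $s_M$ onto $I(N)$, so $I(N)$ is unimodular; if $N$ is spin then so is $M$ (as $F$ is stably parallelizable), and then $\sign I(N) \equiv 0 \pmod{16}$ by the hypothesis $KS(M)=0$ together with Rokhlin's theorem, giving Condition~\textup{\ref{three-new}}.

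Next I would take $\phi := h^* \colon H^*(M\times F) \to H^*(N)$ and verify Condition~\textup{\ref{three}}. Items (i)--(iii) are immediate: orientation and the factor maps are preserved because $h$ is an orientation-preserving homeomorphism satisfying $q_2\circ h$ classifying the universal cover, and $w_2$ is a topological invariant. The crux is item (iv), which involves the Pontryagin class $p_1(N)$ of the unknown smooth structure. Here I would invoke Novikov's theorem on topological invariance of rational Pontryagin classes, combined with the fact that $H^4(M\times F;\bbZ)$ is torsion-free (because $M$ is simply connected and $F$ is a surface), to conclude $h^*(p_1(M\times F)) = p_1(N)$, where the left-hand side refers to the standard smooth structure. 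Since $F$ is stably parallelizable, $p_1(M\times F) = q_1^* p_1(M)$, so item (iv) reduces to the computations
$$
\la q_2^*[F] \cup q_1^* p_1(M),\, [M\times F]\ra = \la p_1(M),[M]\ra = 3\sign(M),
$$
the first equality using $\la [F],[F]\ra = 1$ and the second being Hirzebruch's signature theorem, together with
$$
\la q_1^* y \cup q_1^* p_1(M),\, [M\times F]\ra = 0 \qquad \text{for all } y\in H^2(M),
$$
which vanishes since $y \cup p_1(M) \in H^6(M) = 0$.

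With every hypothesis of Theorem~A verified, the theorem supplies an orientation- and base-point-preserving diffeomorphism $N \to M\times F$, proving uniqueness. The principal obstacle is item (iv) of Condition~\textup{\ref{three}}: the integral class $p_1$ is not a topological invariant in general, and it is precisely Novikov's theorem, together with the torsion-free cohomology of $M \times F$, that lets us transport the Pontryagin data from the standard smooth structure to the arbitrary one on $N$.
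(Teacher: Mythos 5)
Your proposal is correct and follows essentially the same route as the paper: apply Theorem A to an arbitrary smooth structure on $M\times F$, using Novikov's topological invariance of rational Pontryagin classes to verify Condition \ref{three} (the paper states this with $\phi = id$, which is your $\phi = h^*$ after identifying $N$ with $M\times F$ via the homeomorphism). Your version simply spells out the verification of Conditions \ref{one}--\ref{three-new} and the evaluation computations in Condition \ref{three}(iv) that the paper's two-sentence proof leaves implicit.
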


\begin{proof} We can apply Theorem A to the topological manifold $M \times F$ equipped with two different smooth structures. By Novikov \cite[Theorem 1]{novikov1}, we have Condition \ref{three} with $\phi = id$.
\end{proof}

\begin{remark}
The results of Kirby and Siebenmann \cite[Theorem 5.4, p.~318]{kirby-siebenmann1} show that the set of distinct smoothings of $M\times F$ is in bijection with 
$$[M\times F, TOP/O] = [M\times F, TOP/PL] =  H^3(M\times F; \cy 2),$$
since in this dimension every $PL$ manifold admits a unique smooth structure. 
 Theorem A shows that $\Homeo(M\times F)$ acts transitively on the set of smoothings.  It would be interesting to construct a corresponding homeomorphism for each $\alpha \in H^3(M \times F;\mathbb Z/2)$.
 
 Here a \emph{smoothing} is a pair $(N,h)$, where $N^6$ is a smooth $6$-manifold and $h\colon N \to M\times F$ is a homeomorphism; two smoothings $(N, h)$ and $(N', h')$ are equivalent if there exists a diffeomorphism $\varphi \colon N \to N'$ such that $h$ and $h'\circ \varphi$ are topologically isotopic.
\end{remark}

\begin{remark} The effectiveness of our recognition result in practice will depend on the difficulty of verifying Conditions \ref{three-new} and \ref{three}, but most of this is linear algebra.  After obtaining Conditions 1 and 2, one might proceed by showing that $H^*(\widetilde N)$ is isomorphic to a $4$-dimensional  algebra $\Lambda^*$, with $\Lambda^0 = \Lambda^4 = \bbZ$, $\Lambda^1 = \Lambda^3 = 0$, carrying the symmetric bilinear form $I(N)\colon  \Lambda^2\otimes \Lambda^2 \to \bbZ$ on a free abelian group $\Lambda^2 \cong \bbZ^r$. This gives the Euler characteristic equation $\chi(N) = \chi(F)\cdot (r+2)$, and shows
 that $H^3(N)\cong H^1(F) \otimes H^2(\widetilde N)$ is torsion-free.  Now Poincar\'e duality for $H^3(N)$ shows that $I(N)$ is unimodular.
  After that, it will be necessary to check that $H^*(N) \cong \Lambda^*\otimes H^*(F)$ as graded algebras, and proceed to construct a cohomology ring isomorphism $\phi\colon H^*(M\times F) \to H^*(N)$ with the required conditions on $w_2(N)$  and $p_1(N)$.

However complicated the process, at least the conditions depend only on the primary algebraic topology of $N$ and do not involve determining the full homotopy type of $N$. For example, we do not assume anything about $\pi_3(N)$. 
\end{remark}

\begin{acknowledgement} The authors would like to thank Larry Taylor and the referee for helpful comments and suggestions.
\end{acknowledgement}
\section{The normal $2$-type and normal $2$-smoothings}\label{sec:two}

For the proof we use the methods from \cite{kreck3} and assume that the reader is familiar with the basic concepts and theorems although we repeat the relevant definitions briefly.

We abbreviate $\HH:= H^2(N)/{u^*H^2(F)}\cong H^2(\widetilde N) $, and let 
 $H = \pi_2(N) = H_2(\widetilde N)$.  We have $H^2(\widetilde N) \cong \Hom_{\bbZ}(H_2(\widetilde N), \bbZ)$, so that $H  \cong \Hom_{\bbZ}(V, \bbZ) = V^*$.
The following remark is immediate  from the definitions. 
\begin{lemma}\label{identity} If
$N$ satisfies Condition \textup{\ref{three}} with respect to $M\times F$, then  $H^2(M) \cong \HH$.
\end{lemma}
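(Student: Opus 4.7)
The plan is to read off the isomorphism directly from the Künneth decomposition of $H^2(M\times F)$ combined with property (ii) of Condition \ref{three}. Since $M$ is simply connected, we have $H^1(M) = 0$, so the Künneth theorem (applied over $\bbZ$, with no Tor terms because everything is torsion-free) gives the splitting
$$
(q_1^*,\, q_2^*)\colon H^2(M) \oplus H^2(F) \xrightarrow{\cong} H^2(M\times F).
$$

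Now apply the ring isomorphism $\phi$ in degree two. This yields a direct sum decomposition
$$
H^2(N) \;=\; \phi\bigl(q_1^*H^2(M)\bigr) \,\oplus\, \phi\bigl(q_2^*H^2(F)\bigr).
$$
By property (ii) of Condition \ref{three}, the second summand equals $u^*H^2(F)$. Quotienting out by this subgroup therefore identifies $\HH = H^2(N)/u^*H^2(F)$ with the first summand $\phi(q_1^*H^2(M))$, which in turn is isomorphic to $H^2(M)$ via $\phi \circ q_1^*$ (injective because $\phi$ is an isomorphism and $q_1^*$ is a split injection). This gives the required isomorphism $H^2(M) \cong \HH$.

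There is really no obstacle here; the statement is essentially bookkeeping given the precise form of Condition \ref{three}. The only care needed is to justify the Künneth splitting and to verify that $\phi \circ q_1^*$ is injective onto a complement of $u^*H^2(F)$, both of which are immediate from the hypotheses.
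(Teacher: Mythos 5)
Your argument is correct and is exactly the bookkeeping the paper has in mind: the paper offers no written proof, declaring the lemma ``immediate from the definitions,'' and your route via the K\"unneth splitting $H^2(M\times F)\cong H^2(M)\oplus H^2(F)$ together with Condition \ref{three}(ii) identifying $\phi(q_2^*H^2(F))=u^*H^2(F)$ is precisely that immediate argument, spelled out.
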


 We start by determining the normal $2$-type of $N$.
 By definition, this is a fibration $B$ over $BSO$ where the homotopy groups of the fibre vanish in degree $\ge 3$ and such there is a lift of the normal Gauss map of $N$ over $B$, which is a $3$-equivalence. We have to distinguish two cases, where the symmetric bilinear form $I(N)\colon V \times V \to \bbZ$  is a even or odd. In the first case, the normal $2$-type is 
$$
p_{even}\colon B = K(H,2) \times F  \times \BSpin \to BSO,
$$ 
where the map is the composition of the projection to $\BSpin$ and the canonical projection to $BSO$. If the form $I(N)$ is odd, one chooses a primitive characteristic element $v \in V$,  and a complex line bundle $L_v$ over $K(H,2)$ with first Chern class $v$.  Then the normal $2$-type is 
$$
p_{odd}\colon B = K(H,2) \times F  \times \BSpin \to BSO,
$$
where $p_{odd}$ is the map given by the projection to $K(H,2) \times \BSpin$ composed by the map given by the Whitney sum of line bundle $L_v$ and the canonical map to $BSO$ (of course, we have to replace this map by a fibration). 

\begin{lemma}\label{lem:2smoothings}
The normal $2$-types of $M\times F$ and $N$ are given by $(B, p_{even})$, if $M$ is spin, or $(B,p_{odd})$ if $M$ is non-spin.
\end{lemma}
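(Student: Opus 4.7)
The strategy is to construct explicit normal $2$-smoothings $\bar\nu_N : N \to B$ and $\bar\nu_{M \times F} : M \times F \to B$ lifting the respective normal Gauss maps, and to verify that both are $3$-equivalences. In either case the fibration $B \to BSO$ has homotopy fibre with $\pi_i = 0$ for $i \ge 3$: the map $\BSpin \to BSO$ is a $\pi_i$-isomorphism for $i \ge 2$, and the $K(H, 2)$ and $F$ factors contribute nothing in degrees $\ge 3$. Moreover $\pi_1(B) = \pi_1(F)$ and $\pi_2(B) = H$, matching the corresponding groups for $N$ (by Conditions \ref{one} and \ref{two}) and for $M \times F$ (by simple connectivity of $M$, Hurewicz, and Lemma \ref{identity}, which together give $\pi_2(M) = H_2(M) = V^* = H$).

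For $N$, I build $\bar\nu_N$ as the product of three components. The first is the classifying map $u : N \to F$ from Condition \ref{one}. The second is a map $c : N \to K(H, 2)$ inducing the identity on $\pi_2(N) = H$; such $c$ exists because $H^2(N; H) = [N, K(H, 2)]$ and the Serre spectral sequence of $\widetilde N \to N \to F$ with coefficients $H$ yields a short exact sequence
\[
0 \to H^2(F; H) \xrightarrow{u^*} H^2(N; H) \to \Hom(H, H) \to 0
\]
into which $\mathrm{id}_H$ lifts. For the third ($\BSpin$) component, in the spin case $w_2(N) = 0$ (equivalently $M$ is spin, using Condition \ref{three}(iii) and $w_2(F) = 0$) and $\nu_N$ lifts directly to $\BSpin$. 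In the non-spin case I instead choose $c$ so that $c^*(v) \bmod 2 = w_2(N) \in H^2(N; \cy 2)$; then $\nu_N - c^* L_v$ is a virtual spin bundle and lifts to $\BSpin$, providing a lift $\bar\nu_N : N \to B$ over $p_{\mathrm{odd}}$.

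The analogous construction for $M \times F$ uses $q_2$ for the $F$-component, the canonical Hurewicz-induced map $M \to K(H, 2)$ composed with $q_1$ for the $K(H, 2)$-component, and the corresponding spin lift (direct if $M$ is spin, otherwise using the same $v$ with $L_v$ removed from $\nu_M$). In every case the resulting composite is a $3$-equivalence: on $\pi_1$ it is $\alpha$ (or the identity on $M \times F$), on $\pi_2$ it is $\mathrm{id}_H$ by construction, and $\pi_3$-surjectivity is automatic because $\pi_3(B) = 0$.

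The main obstacle is matching $w_2$ in the non-spin case. By Wu's formula a primitive characteristic element $v \in V$ reduces modulo $2$ to $w_2(M) \in V/2V = H^2(M; \cy 2)$. Under the identification $H^2(M) \cong V$ from Lemma \ref{identity}, Condition \ref{three}(iii) gives $w_2(N) = \phi(q_1^* w_2(M))$, so $c^*(v)$ and $w_2(N)$ have the same image in $V/2V \cong H^2(N; \cy 2)/u^* H^2(F; \cy 2)$, and their difference lies in the $\cy 2$-subgroup $u^* H^2(F; \cy 2)$. This residual ambiguity is killed by modifying $c$ through the edge term $u^* : H^2(F; H) \to H^2(N; H)$ above, using that in the non-spin case $v \not\equiv 0 \pmod 2$, so some $h \in H$ has $v(h)$ odd. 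This completes the construction of $\bar\nu_N$ and identifies the normal $2$-type of $N$ and $M \times F$ as claimed.
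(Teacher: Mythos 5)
Your construction is correct and follows essentially the same route as the paper: one exhibits explicit $3$-equivalences from $N$ and $M\times F$ to $K(H,2)\times F$ (the vanishing of the relevant homotopy of the fibre of $B\to BSO$ being checked as you do) together with spin structures on the stable normal bundle, twisted by $L_v$ in the odd case. The only real difference is that the paper defines $g_N$ by $(g_N)^* = \phi\circ (g_{M\times F})^*$, so Condition \ref{three}(iii) matches $w_2$ on the nose and the resulting normal $2$-smoothings are compatible with $\phi$ (which is what gets used later in the bordism comparison), whereas you take an arbitrary lift of $\mathrm{id}_H$ and then remove the residual discrepancy in $u^*H^2(F;\cy 2)$ by modifying $c$ through $u^*H^2(F;H)$ --- a valid, if slightly longer, fix.
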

\begin{proof}
 We first look at the second stage of the Postnikov tower of $N$, this is  a fibration over $K(\pi_1(N),1)$ with fibre $K(\pi_2(N),2)$, where in our situation $\pi_2(N) = H$. These fibrations are classified by the action of $\pi_1(N)$ on $\pi_2(N)$ and the $k$-invariant $k \in H^3(\pi_1(N); \pi_2(N) )$. This group is zero, and so the action of $\pi_1(N)$ on $\pi_2(N)$ determines the Postnikov tower. If the $\pi_1(N)$-action is trivial, then we have the trivial fibration. 
Next, we use our data to construct  a $3$-equivalence
$$c_{M \times F}: = g_{M \times F} \times h_{M \times F}\colon M \times F \to K(H,2) \times F,$$
and a $3$-equivalence
$$c_N:= g_{N} \times h_{N}\colon N \to K(H,2) \times F,$$
 which is compatible with our data $\alpha$ and $\phi$. For this we consider the map $$g_{M \times F}\colon M \times F \to K(H,2)$$ such that  
 $(g_{M \times F})^*\colon  V  \to H^2(M\times F) = H^2(M) \oplus H^2(F) = V \oplus H^2(F)$ is the inclusion onto the first summand (see Lemma \ref{identity}), and   choose a base point preserving map $g_N\colon  N \to K(H,2)$ such that $(g_N)^* = \phi \circ (g_{M\times F})^*$. Then we consider the projection $h_{M \times F}=q_2 \colon  M \times F \to F$ and $h_N = u\colon  N \to F$. From Conditions \ref{one} - \ref{three} it is clear that
 the maps $c_{M \times F}$ and $c_N$ are $3$-equivalences, with $(c_N)^*= \phi\circ (c_{M\times F})^*$. 
 
If $N$ is  Spin-manifold, then by assumption $M \times F$ is a Spin-manifold and we equip both manifolds with an arbitrary Spin structure $\omega_N$ and $\omega_{M \times F}$. If $N$ and so $M \times F$ are not  Spin-manifolds, then we choose a  primitive  class $v \in H^2(M \times F;\mathbb Z)$, such that its component in $H^2(F;\mathbb Z)$ is zero, which reduces to $w_2(M \times F)$ and a spin structure $\omega_{M \times F}$ on $\nu (M \times F) \oplus L_v$, where $L_v$ is the complex line bundle classified by $v$. Similarly, we choose a Spin structure $\omega_{N}$ on $\nu(M) \oplus L_{\phi(v)}$. The maps $c_{M \times F}$ and $c_N$ together with the (twisted) Spin-structures are normal 2-smoothings in $(B, p_{odd/even})$. 
\end{proof}

\section{The bordism groups}

 The next step in the proof of Theorem A is to show that, under the given conditions, the  normal $2$-smoothings constructed in Section \textup{\ref{sec:two}} are bordant in $\Omega_6(B, \xi)$, where $\xi$ is the bundle classified by $p_{odd}$ or $p_{even}$ depending on the normal 2-type.
 
 The method of proof is based on detecting elements in the bordism group by explicit invariants. We have $H \cong \bbZ^r$ so that $K := K(H,2) = (\CP^\infty)^r$. Let $Dp_1(N) \in H_2(N)$ denote the Poincar\'e dual of the first Pontrjagin class.

 \begin{proposition} There is an injection
 $\Omega_6(B,\xi) \to \bbZ \oplus H_6(K) \oplus H_4(K)\oplus H_2(K)$,
 given 
 by $\sign I(N)$, and the images of $[N]$, $[N]\cap u^*([F])$, $Dp_1(N)$ under the reference maps $c_N\colon N \to B$ for the normal $2$-types.
 \end{proposition}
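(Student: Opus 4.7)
The plan is to compute $\Omega_6(B,\xi)$ up to the information detected by the four invariants. First I would verify well-definedness by rewriting each invariant as a characteristic number associated to the normal $2$-smoothing $(N, c_N)$. The signature is itself a characteristic number: applying the Hirzebruch signature theorem to a transverse preimage of a point of $F$ gives
\[
\sign I(N) \;=\; \tfrac{1}{3}\langle u^*[F]\cup p_1(N),[N]\rangle,
\]
a Pontrjagin number with coefficient class pulled back from $B$. The three pushforwards $(c_N)_*[N]\in H_6(K)$, $(c_N)_*([N]\cap u^*[F])\in H_4(K)$, and $(c_N)_*(Dp_1(N))\in H_2(K)$ are standard Pontrjagin--Thom invariants: they are bordism invariant because $u^*[F]$ is pulled back from $B$, $p_1$ is a characteristic class of $\xi$, and the pushforward of the fundamental class of a boundary vanishes.

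For injectivity, the computational tool I would use is the Atiyah--Hirzebruch spectral sequence for $\Omega^{\Spin}_*(K\times F)$ in the spin case, or its twisted analogue (the Thom spectrum of $L_v\oplus\gamma$ over $K\times\BSpin$) in the non-spin case. Since $K=(\CP^\infty)^r$, both $H_{\mathrm{odd}}(K;\bbZ)$ and $H_{\mathrm{odd}}(K;\cy 2)$ vanish, and K\"unneth gives
\begin{align*}
H_6(K\times F;\bbZ) &\cong H_6(K)\oplus H_4(K)\otimes H_2(F),\\
H_2(K\times F;\bbZ) &\cong H_2(K)\oplus H_2(F).
\end{align*}
Using $\Omega_0^{\Spin}=\bbZ$, $\Omega_4^{\Spin}\otimes\bbQ\cong\bbQ$ (detected by the signature), and $\Omega_k^{\Spin}=0$ for $k=3,5,6$, the columns $E^2_{6,0}$ and $E^2_{2,4}$ account for all rational contributions in total degree six. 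The four listed invariants recover them: $(c_N)_*[N]$ and $(c_N)_*([N]\cap u^*[F])$ recover $H_6(K)$ and $H_4(K)\otimes H_2(F)$ inside $E^2_{6,0}$; $(c_N)_*(Dp_1(N))$ recovers the $H_2(K)$-summand of $E^2_{2,4}$; and $\sign I(N)$ recovers the $H_2(F)$-summand via the identity above.

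The main obstacle is to control the potential $\cy 2$-torsion in $E^2_{5,1}=H_5(K\times F;\cy 2)$ and $E^2_{4,2}=H_4(K\times F;\cy 2)$. By K\"unneth and $H_{\mathrm{odd}}(K;\cy 2)=0$, these reduce to
\[
H_4(K;\cy 2)\otimes H_1(F;\cy 2)\quad\text{and}\quad H_4(K;\cy 2)\oplus H_2(K;\cy 2)\otimes H_2(F;\cy 2),
\]
respectively. I would show these are killed by the $d_2$ differentials dual to the Steenrod operation $Sq^2$: on $(\CP^\infty)^r$ the squaring map $Sq^2\colon H^2(K;\cy 2)\to H^4(K;\cy 2)$ is injective, dualising to surjective $d_2$ differentials that eliminate the relevant $E_\infty$-pieces, or at worst leave contributions detected by the mod $2$-reductions of the integer invariants already in hand. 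In the non-spin case the twist by $L_v$ only modifies the $p_1$-contribution by a multiple of $v^2$, which is absorbed into $(c_N)_*(Dp_1(N))$, so the same analysis applies.
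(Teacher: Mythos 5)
Your overall framing (AHSS over $K\times F$, K\"unneth, rational detection by the four invariants plus torsion-freeness) is reasonable, but the step where you dispose of the $2$-torsion is wrong, and it is exactly the crux of the proposition. The entry you must kill in total degree six is $E^2_{4,2}=H_4(K\times F;\cy2)$. Its outgoing $d_2$ lands in $E^2_{2,3}=H_2(K\times F;\Omega_3^{\Spin})=0$, so the only differential available is the incoming one $d_2\colon E^2_{6,1}\to E^2_{4,2}$, which is dual to $Sq^2\colon H^4(K\times F;\cy2)\to H^6(K\times F;\cy2)$ --- not to $Sq^2\colon H^2\to H^4$ as you use. That map is not injective on $(\CP^\infty)^r$: by the Cartan formula $Sq^2(x_i^2)=0$ for each generator $x_i\in H^2(K;\cy2)$, so the classes dual to $x_i^2$ are not hit and survive to $E^3_{4,2}$. (Your injectivity of $Sq^2\colon H^2\to H^4$ is the right ingredient for a different entry, namely $E^2_{4,1}$ in the degree-five line, which is what gives $h_5(pt)=0$.) Already in the untwisted case with $r=1$ and $F$ replaced by a point, the $\cy2$ at position $(4,2)$ survives to $E^\infty$, and whether it contributes actual torsion to $\Omega_6^{\Spin}(\CP^\infty)$ is an extension problem that the spectral sequence cannot decide. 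The paper settles it by quoting the geometric computation $\Omega_6^{\Spin}(\CP^\infty)=\bbZ\oplus\bbZ$ from Kreck's paper on asymmetric manifolds, and then propagates torsion-freeness to all $r$ (and to the twisted types $p_{odd}$) by an induction using Gysin sequences, combined with a Mayer--Vietoris decomposition of the surface rather than a direct AHSS over $K\times F$. Some such input beyond the $E_2$-page differentials is unavoidable.

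Your fallback clause --- that surviving torsion is ``at worst detected by the mod $2$-reductions of the integer invariants already in hand'' --- cannot repair this. The target $\bbZ\oplus H_6(K)\oplus H_4(K)\oplus H_2(K)$ is torsion-free, so if a nonzero element of order two existed in $\Omega_6(B,\xi)$ it would map to zero under the stated homomorphism and injectivity would simply be false; detection by mod $2$ reductions is not available for the map in the statement. What your argument is missing, and what must be supplied, is a proof that the relevant bordism groups are torsion-free (equivalently, that the nontrivial extensions occur), which is precisely the content of the paper's Lemmas on $\Omega_6^{\Spin}(\CP^\infty)$, the twisted groups $\Omega_4^{\Spin}(\CP^\infty;L)$, and the Gysin-sequence induction over $r$. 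Once torsion-freeness is known, your rational count of $E^2_{6,0}$ and $E^2_{2,4}$ does suffice to conclude injectivity, so that part of your plan is fine.
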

To compute the bordism groups we consider the functor associating to a space $X$ the bordism group of $p_{odd/even}\colon  X \times   K(H,2) \times \BSpin \to BSO$, where the maps are defined as above in the case $X = F$. This is a homology theory  denoted by $h_k(X)$ and so we can use the Mayer-Vietoris sequence to compute it,  by writing a surface of  genus $g$ as $D_2 \cup Y$, where $Y$ is a wedge of $2g$ circles. Then we obtain an exact sequence
$$
 \tilde h_7(S^2) \to  \tilde h_6(Y) \to \tilde h_6(F) \to \tilde h_6(S^2) \to \tilde h_5(Y),
$$
or, if we apply the suspension isomorphism, the exact sequence: 
\eqncount
\begin{equation}\label{eqn:threeone}
h_5(pt) \to \sum _{2g} h_5(pt)  \to \tilde h_6(F) \to h_4(pt) \to .
\end{equation}
The map from $h_6(F)$ to $h_4(pt)$ is defined by sending $[N, c_N] \mapsto [Q, c_Q]$, 
where $c_N\colon  N \to B$ is a lift of the normal Gauss map, and $Q \subset N$ is the pre-image of a regular value of the composition of the map to $B$ with the projection to $F$. The reference map $c_Q \colon Q \to B$ is given by the restriction of $c_N$ to $K:=K(H,2)$, together with the induced bundle  and  (twisted) Spin-structure. 

\medskip
To proceed further we need information about 
$h_k(pt) = \Omega^{\Spin}_k((\CP^\infty)^r)$, for $p_{even}$, and 
$h_k(pt) = \Omega^{\Spin}_k((\CP^\infty)^r, L)$, for $p_{odd}$.
We begin with the case $r=1$. 
\begin{lemma}\label{lem:rone} Let $L$ denote the Hopf bundle over $\CP^\infty$.
\begin{enumerate}
\item The map 
$\Omega_4^{\Spin}(\CP^\infty )\to \mathbb Z \oplus \mathbb Z$ given by 
the signature and the image of the fundamental class is injective. 
\item $\Omega_6^{\Spin}(\CP^\infty) = \mathbb Z \oplus \mathbb Z$,
detected by the image of the fundamental class  and the image $Dp_1(N)$, 
\item $ \Omega_4^{\Spin}(\CP^\infty;L)\cong \bbZ \oplus \bbZ$, detected by the image of the fundamental class and the image of 
$Dp_1(N)$.
\end{enumerate}
\end{lemma}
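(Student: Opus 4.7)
My approach is to compute each bordism group via the Atiyah--Hirzebruch spectral sequence (AHSS), determine the rank in the relevant degree, and then evaluate the claimed invariants on explicit geometric generators to establish injectivity or an isomorphism. The input is the standard table of low-dimensional spin bordism, namely $\Omega_0^{\Spin}=\bbZ$, $\Omega_1^{\Spin}=\Omega_2^{\Spin}=\bbZ/2$, $\Omega_3^{\Spin}=0$, $\Omega_4^{\Spin}=\bbZ$ (generated by $K3$ with signature $-16$), and $\Omega_5^{\Spin}=\Omega_6^{\Spin}=0$, together with the fact that $\CP^\infty$ has only even-dimensional cells.

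For part (i), the $E_2$-page $E_2^{p,q}=H_p(\CP^\infty;\Omega_q^{\Spin})$ contributes in total degree $4$ only at $(0,4)=\bbZ$, $(2,2)=\bbZ/2$, and $(4,0)=\bbZ$, giving rational rank $2$. The map $[N,c_N]\mapsto (\sigma(N), c_1^2[N])$ sends $K3$ with trivial classifying map to $(-16,0)$ and the spin quadric $S^2\times S^2\hookrightarrow\CP^3\subset\CP^\infty$ to $(0,2)$, so it is rationally injective. Injectivity over $\bbZ$ then reduces to showing that the $\bbZ/2$ contribution at $(2,2)$ is killed before $E_\infty$; this follows from the standard identification of $d_2$ on $\Omega_*^{\Spin}$ with (a dualized version of) $Sq^2$ applied to mod-$2$ reductions, which is readily checked to kill the relevant class in $H_*(\CP^\infty;\bbZ/2)$.

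For part (ii), the same AHSS in total degree $6$ has nonzero terms at $(2,4)=\bbZ$, $(4,2)=\bbZ/2$, and $(6,0)=\bbZ$. Two explicit generators are $(\CP^3, h)$ with $h$ the hyperplane class, and $K3\times S^2$ classified by the generator of $H^2(S^2)$. For the first, $\CP^3$ is spin since $c_1(\CP^3)=4h$, and $c(\CP^3)=(1+h)^4$ yields $p_1(\CP^3)=4h^2$, so the invariants are $(c_1^3[N], c_*Dp_1(N))=(1,4)$. For the second, $c_1^3=0$ because $c_1$ has only the $S^2$-factor component, while $p_1[K3]=-48$ gives $c_*Dp_1=-48$. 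These are rationally independent. A $d_2$-argument analogous to (i) kills the $\bbZ/2$ at $(4,2)$, so $\Omega_6^{\Spin}(\CP^\infty)\cong\bbZ\oplus\bbZ$, into which the stated invariants inject.

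For part (iii), the twisted bordism $\Omega_*^{\Spin}(\CP^\infty;L)$ corresponds to the AHSS for the tangential structure $p_{odd}$; the $E_2$-page matches that of (i), but $d_2$ is modified by the twisting class $w_2(L)$. Generators may be taken as $(\CP^2, c_1=h)$, which is admissible because $w_2(\CP^2)=h=c_1(L)\bmod 2$ and gives $(c_1^2[N], p_1[N])=(1,3)$, together with $K3$ equipped with the trivial line bundle, which gives $(0,-48)$. After the same AHSS argument to exclude $2$-torsion, we conclude $\Omega_4^{\Spin}(\CP^\infty;L)\cong\bbZ\oplus\bbZ$, detected injectively by the displayed invariants. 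The main obstacle throughout is the careful treatment of the $d_2$-differentials so as to eliminate all $\bbZ/2$ contributions; once this is handled, the remainder is direct characteristic-number evaluation on the listed manifolds.
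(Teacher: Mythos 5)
Your part (i) is fine and is essentially the paper's argument: the only possible torsion, the $\bbZ/2$ at position $(2,2)$, is hit by the $d_2$ coming from $(4,1)$, which is dual to $Sq^2\colon H^2(\CP^\infty;\cy2)\to H^4(\CP^\infty;\cy2)$, $x\mapsto x^2\neq 0$; combined with your rank count and the two explicit generators this gives injectivity.

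Parts (ii) and (iii), however, contain a genuine gap at exactly the point you wave at. In (ii) the differential you invoke is dual to $Sq^2\colon H^4(\CP^\infty;\cy2)\to H^6(\CP^\infty;\cy2)$, and $Sq^2(x^2)=0$ (Cartan formula, with $Sq^1x=0$), so \emph{no} differential kills the $\cy2$ at $(4,2)$: it survives to $E^\infty$, and the spectral sequence only tells you that $\Omega_6^{\Spin}(\CP^\infty)$ is either $\bbZ\oplus\bbZ$ or $\bbZ\oplus\bbZ\oplus\cy2$. The paper says this explicitly ("this time the differential vanishes") and resolves the ambiguity not by the AHSS but by quoting the computation in \cite[p.~258]{kreck5}, i.e.\ the extension between the $(2,4)$ and $(4,2)$ filtration quotients is nontrivial. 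Your characteristic-number evaluation on $(\CP^3,h)$ and $K3\times S^2$ only establishes the rational rank; if a $\cy2$ summand were present, the integer-valued invariants could not detect it, so the claimed injectivity would fail. The same problem recurs in (iii), where it is even sharper: with the twist the relevant $d_2$ is dual to $y\mapsto Sq^2y+w_2(L)\cup y$ on $H^2(\CP^\infty;\cy2)$, and for the generator this is $x^2+x^2=0$, so the twisted differential you hope will remove the $\cy2$ at $(2,2)$ is identically zero. The paper instead proves (iii) by a Gysin/Smith argument: taking the transverse preimage of $\CP^{N-1}$ gives an isomorphism $\Omega_6^{\Spin}(\CP^\infty)\cong\Omega_4^{\Spin}(\CP^\infty;L)$ (using $\Omega_5^{\Spin}=\Omega_6^{\Spin}=0$), reducing (iii) to the already-settled case (ii). So to repair your proof you must either import the extension result of \cite{kreck5} (or reprove it, e.g.\ by an Adams or $ko$-theoretic argument, or by exhibiting an element whose invariants are not divisible in the expected way), and then deduce (iii) from (ii) by the Smith isomorphism rather than by a direct AHSS computation.
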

\begin {proof} 
The $E^2$-term of the Atiyah-Hirzebruch spectral sequence  computing $\Omega_4^{\Spin}(\CP^\infty)$ gives $\mathbb Z$ in position $(0,4)$ and $(4,0)$, and $\mathbb Z/2$ in position $(2,2)$.
 The differential 
 $$d\colon  H_4(\CP^\infty;\mathbb Z) \to H_2(\CP^\infty;\mathbb Z/2)$$
  is the reduction mod $2$ composed by the dual of $Sq^2$ \cite[Proposition 1, p.~750]{teichner1} and so is nontrivial. This implies that 
$$
\Omega_4^{\Spin}(\CP^\infty )\to \mathbb Z \oplus \mathbb Z
$$
given by the signature and the image of the fundamental class is injective. 

Analyzing the Atiyah-Hirzebruch spectral sequence for $\Omega_6^{\Spin}(\CP^\infty)$ gives an entry $\mathbb Z$ at position $(2,4)$ and $(6,0)$ and $\mathbb Z/2$ at position $(4,2)$. This time the differential vanishes and so the bordism group is either $\mathbb Z \oplus \mathbb Z$ or $\mathbb Z \oplus \mathbb Z \oplus \mathbb Z/2$. It was proven in \cite[p.~258]{kreck5} that
$$\Omega_6^{\Spin}(\CP^\infty) = \mathbb Z \oplus \mathbb Z,$$
detected by the image of the fundamental class  and the image of $Dp_1(N)$.

Now we consider the bordism groups twisted by the line bundle $L$.  We reduce the $4$-th bordism group  to the untwisted case by using the  isomorphism given by taking the transversal preimage of  $\CP^{N-1}$, where we replace $\CP^\infty$ by $\CP^N$ for a large $N$:
$$
\Omega_6^{\Spin}(\CP^\infty )\cong \Omega_4^{\Spin}(\CP^\infty;L)
$$
(here we use that $\Omega_6^{\Spin} = \Omega_5^{\Spin} = 0$)
implying that $\Omega_4^{\Spin}(\CP^\infty;L)  \cong \mathbb Z \oplus \mathbb Z$ again detected by the image of the fundamental class and the signature. 

Finally the computation of $\Omega_4^{\Spin}(\CP^\infty;L) \cong \mathbb Z \oplus \mathbb Z$, again detected by the image of the fundamental class and the image of $Dp_1(N)$, follows from the Atiyah-Hirzebruch spectral sequence. This time the $E^\infty $-term is torsion free in the $6$-line, since the differential $d\colon  H_6(\CP^\infty;\mathbb Z) \to H_4(\CP^\infty;\mathbb Z/2)$ is the reduction mod $2$ composed by the dual of $Sq^2$ plus $c_1(L) \cup ...)$ (see again \cite[Proposition 1, p.~750]{teichner1}) and so is trivial.
\end{proof}

\begin{lemma}\label{lem:threeone} $h_5(pt)$ is zero.
  The map $h_6(pt) \to H_6(K) \oplus H_2(K)$ given by the image of the fundamental class and the  image of $Dp_1(N)$  is injective.
 The map given by the signature and the image of the fundamental class is an injection $h_4(pt) \to \mathbb Z \oplus H_4(K)$.
\end{lemma}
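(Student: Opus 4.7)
The plan is to compute $h_\ast(\mathrm{pt})$ via the Atiyah--Hirzebruch spectral sequence
\[
E^2_{p,q} = H_p(K;\Omega^{\mathrm{Spin}}_q(\mathrm{pt})) \Longrightarrow h_{p+q}(\mathrm{pt}),
\]
applied to $K = (\CP^\infty)^r$ (and its $L$-twisted analog in the case $p_{odd}$). Since $H^*(K;\bbZ) \cong \bbZ[y_1,\ldots,y_r]$ is torsion-free and concentrated in even degrees, $E^2_{p,q}$ vanishes whenever $p$ is odd, and combined with $(\Omega^{\mathrm{Spin}}_0,\ldots,\Omega^{\mathrm{Spin}}_6) = (\bbZ,\bbZ/2,\bbZ/2,0,\bbZ,0,0)$ this confines each diagonal $p+q\in\{4,5,6\}$ to very few entries. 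Following \cite[Prop.~1]{teichner1}, the differential $d_2$ is the dual of a Steenrod operation: on the $q=0$ row it is mod-$2$ reduction composed with the dual of $Sq^2\colon H^{*-2}(K;\bbZ/2)\to H^*(K;\bbZ/2)$ (plus a $c_1(L)\cup -$ correction in the twisted case), and on the $q=1$ row it is the dual of $Sq^2$ directly; the Cartan formula together with $Sq^2 x_i = x_i^2$ makes all relevant $d_2$'s explicit.

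For (i), the only potentially nonzero entry on $p+q=5$ is $E^2_{4,1} = H_4(K;\bbZ/2)$. A direct calculation shows the outgoing $d_2\colon E^2_{4,1}\to E^2_{2,2}$ has kernel spanned by the classes dual to the mixed monomials $\{x_ix_j\}_{i<j}$, while the image of the incoming $d_2\colon E^2_{6,0}\to E^2_{4,1}$ is exactly that same subspace (produced by $Sq^2(x_ix_j) = x_i^2 x_j + x_j^2 x_i$); hence $E^3_{4,1}=0$ and $h_5(\mathrm{pt})=0$. For (iii), the entry $E^2_{2,2}=H_2(K;\bbZ/2)$ on $p+q=4$ is killed by the incoming $d_2$ from $E^2_{4,1}$ because $Sq^2\colon H^2\to H^4$ is injective on $K$; the surviving pieces $E^\infty_{0,4}=\bbZ$ and $E^\infty_{4,0}\subset H_4(K)$ are both torsion-free, so the filtration has no extension problem, and they are detected by the signature and the fundamental class respectively.

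Statement (ii) is the main obstacle. On $p+q=6$ the $d_2$-analysis leaves a nonzero $E^\infty_{4,2} \cong (\bbZ/2)^r$, namely the quotient of $H_4(K;\bbZ/2)$ by the subspace dual to the mixed monomials; a pure $E^2$-argument therefore cannot conclude. I would resolve this by showing that the filtration extension
\[
0 \to E^\infty_{2,4} = H_2(K) \to F_4 \to E^\infty_{4,2} \to 0
\]
is totally nonsplit, so that $F_4$ is torsion-free of rank $r$ with $E^\infty_{2,4}$ sitting inside as a subgroup of index $2^r$. The key input is Lemma \ref{lem:rone}(2): for $r=1$ the group $\Omega^{\mathrm{Spin}}_6(\CP^\infty)$ is free of rank $2$ and is detected by the fundamental class and $Dp_1$, which forces the nonsplit extension. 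Naturality under the coordinate inclusions $\CP^\infty \hookrightarrow K$ propagates this to arbitrary $r$, and the relation $\langle p_1(M),[M]\rangle = 3\,\sign(M)$ on spin $4$-manifolds shows that $Dp_1$ realizes the required injection $F_4 \hookrightarrow H_2(K)$. Combined with the fundamental class detecting $E^\infty_{6,0} \subset H_6(K)$, this yields the injection claimed in (ii). The $L$-twisted case reduces to the untwisted one via the Thom-isomorphism transversality argument of Lemma \ref{lem:rone}(3), with $d_2$ carrying the $c_1(L)\cup -$ correction throughout.
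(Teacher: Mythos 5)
Your untwisted ($p_{even}$) analysis is essentially sound, and for that case you take a genuinely different route from the paper: instead of proving torsion-freeness of $h_4(pt)$ and $h_6(pt)$ by an induction over $r$ using Gysin sequences, you resolve the only problematic filtration step $0 \to E^\infty_{2,4} \to F_4 \to E^\infty_{4,2} \to 0$ by pushing the nonsplitness of the $r=1$ extension (Lemma \ref{lem:rone}(2)) into the general case via the coordinate inclusions $\CP^\infty \hookrightarrow K$; with a little care (the extension class is determined coordinatewise, so no linear combination of the $e_{x_i^2}$ can lift to torsion) this does force $F_4$ to be torsion-free, and the $Dp_1$/signature detection then goes through as you say.

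The genuine gap is the twisted case $p_{odd}$, i.e.\ $h_k(pt)=\Omega_k^{\Spin}((\CP^\infty)^r;L)$, which the lemma must also cover and which you dispose of in one sentence. The ``reduction to the untwisted case via the transversality argument of Lemma \ref{lem:rone}(3)'' only works for $r=1$: there the preimage construction is an isomorphism because $\Omega_5^{\Spin}=\Omega_6^{\Spin}=0$ for a point, but for $r>1$ the flanking groups $\Omega_*^{\Spin}((\CP^\infty)^{r-1})$ do not vanish and one only gets a long exact (Gysin) sequence --- which is precisely why the paper runs an induction over $r$ comparing such sequences. Moreover, several of your concrete AHSS statements fail once the differentials carry the $w_2(L)=x_r$ correction: the twisted operation $x\mapsto Sq^2x+x_rx$ kills $x_r$ on $H^2$, so on the $4$-line $E^\infty_{2,2}\cong\bbZ/2$ does \emph{not} die and there \emph{is} an extension problem (its nontriviality is exactly the content of Lemma \ref{lem:rone}(3) for $r=1$, but needs an argument for general $r$); and on the $6$-line the twisted $E^\infty_{4,2}$ has rank $r-1$ with a different basis, while the $r=1$ input your naturality argument would need there is $\Omega_6^{\Spin}(\CP^\infty;L)$, which Lemma \ref{lem:rone} does not supply. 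So as written the proposal proves the lemma only when $I(N)$ is even; the odd (non-spin) case requires either the paper's Gysin induction or a genuinely twisted version of your extension argument, including the missing $r=1$ computation.
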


\begin{proof}
Now we consider the general case.  If we show that the bordism groups are again torsion free, then the statements follow  from the Atiyah-Hirzebruch spectral sequence. We first note that by applying an appropriate isomorphism of $H\cong \bbZ^r$ we can assume in the twisted case that $c_1(L) = (0,...,0,1)$. With this we write $(\CP^\infty)^r = X \times \mathbb \CP^\infty$ and compute $\Omega_k^{\Spin}(X \times \CP^\infty)$ and $\Omega_k^{\Spin} (X \times \CP^\infty;L)$ for $k=4$ and $6$, where $X = (\CP^\infty)^{r-1}$ and $L$ is the Hopf bundle over the last factor. We assume inductively that $\Omega_k(X)$ is torsion free for $k = 4$ and $k=6$. Using again the transversal preimage of  $\CP^{N-1}$, where we replace $\CP^\infty$ by $\CP^N$ for a large $N$,  we have an exact Gysin sequence (see \cite[Section I.6, p.~315]{dax1}, \cite{salmonsen1}): 
$$
\Omega_5^{\Spin}(X \times \CP^\infty;L) \to \Omega_6^{\Spin}(X) \to \Omega_6^{\Spin}(X \times \CP^\infty) \to \Omega_4^{\Spin}(X \times \CP^\infty;L).
$$
Since the odd dimensional groups are by the Atiyah-Hirzebruch spectral sequence torsion, we see that $\Omega_6^{\Spin}(X \times \CP^\infty)$ is torsion free, if 
$\Omega_4^{\Spin}(X \times \CP^\infty;L)$ is torsion free. For this we consider the corresponding exact Gysin sequence (again, see \cite[Section I.6]{dax1}):
$$
\Omega_3^{\Spin}(X \times \CP^\infty;L\oplus L) \to \Omega_4^{\Spin}(X) \to \Omega_4^{\Spin}(X \times \CP^\infty;L) \to \Omega_2^{\Spin}(X \times \CP^\infty;L\oplus L).
$$
The Atiyah-Hirzebruch spectral sequence implies that 
$$ \Omega_2^{\Spin}(X \times \CP^\infty;L\oplus L) \cong H_2(X\times \CP^\infty) \oplus \mathbb Z/2.$$
 Now we compare this exact sequence with that for $X$ a point:
$$
\Omega_3^{\Spin}( \CP^\infty;L\oplus L) \to \Omega_4^{\Spin} \to \Omega_4^{\Spin}( \CP^\infty;L) \to \Omega_2^{\Spin} (\CP^\infty;L\oplus L).
$$
We have maps from the first to the second exact sequence given by the projection from $X$ to a point. Now suppose that $\Omega_4^{\Spin}(X \times \CP^\infty;L)$ contains a torsion element. Then, since by assumption $\Omega_4^{\Spin}(X)$ is torsion free, this maps to the non-trivial torsion element in $\Omega_2^{\Spin}(X \times \CP^\infty;L\oplus L)$. But then the image in $\Omega_4^{\Spin}( \CP^\infty;L)$ is again a non-trivial torsion element, since in $\Omega_2^{\Spin} (\CP^\infty;L\oplus L)$ it maps to the non-trivial element. But this is a contradiction to what we have shown above that $\Omega_4^{\Spin}( \CP^\infty;L)$ is torsion free. 

Now we have shown half of our statements, namely that $\Omega_6^{\Spin}(X \times \CP^\infty)$ is torsion free as well as
$\Omega_4^{\Spin}(X \times \CP^\infty;L)$. We prove the other cases by a similar argument using this time the exact Gysin sequences:
$$
\Omega_5^{\Spin}(X \times \CP^\infty;L^{\oplus 2}) \to \Omega_6^{\Spin}(X) \to \Omega_6^{\Spin}(X \times \CP^\infty;L) \to \Omega_4^{\Spin}(X \times \CP^\infty;L^{\oplus 2})
$$
and 
$$
\Omega_3^{\Spin}(X \times \CP^\infty;L^{\oplus 3}) \to \Omega_4^{\Spin}(X) \to \Omega_4^{\Spin}(X \times \CP^\infty;L^{\oplus 2}) \to \Omega_2^{\Spin}(X \times \CP^\infty;L^{\oplus 3}). $$

This case is easier since $\Omega_2^{\Spin}(X \times \CP^\infty;L^{\oplus 3})$ is torsion free, the torsion in the $E^2$ term is killed by the $d_2$-differential. 

Finally we show that $\Omega_4^{\Spin}(X \times \CP^\infty)$ is torsion free using the exact sequence:
$$
\Omega_3^{\Spin}(X \times \CP^\infty;L) \to \Omega_4^{\Spin}(X) \to \Omega_4^{\Spin}(X \times \CP^\infty) \to \Omega_2^{\Spin}(X \times \CP^\infty;L).
$$
By the same argument as above the group $\Omega_2^{\Spin}(X \times \CP^\infty;L)$ is torsion free finishing the argument. 

Now we show that $h_5(pt) = 0$. On the line corresponding to $h_5(pt)$ the only non-trivial entry in the $E_2$-term is $H_4(K;\mathbb Z/2)$. If $I(N)$ is even, the differentials are even given by the dual of $Sq^2$. If  $I(N)$ is odd, where we had to use twisted Spin-structures, the differentials are given by the dual of $Sq^2$ plus $ x \mapsto Sq^2x + w_2 \cup x$, where $w_2$ is the reduction of $c$ mod $2$. It is an easy exercise to show that the $E^3$-term is zero in both cases. 
\end{proof}

\medskip
With this information we show that the bordism classes of $N$ and $M \times F$, equipped with the normal $2$-smoothings  constructed in Section \ref{sec:two}, agree when identified via the maps  $\alpha$ and $\phi$.  By the exact sequence (\ref{eqn:threeone}) and Lemma \ref{lem:threeone},  this amounts to showing (i) the bordism classes in $h_6(pt)$ agree,  and (ii)  that the classes in $h_4(pt)$ agree, which we obtain as transversal preimages of a regular value of the map to $F$ given by composing our normal $2$-smoothings with the projection to $F$. 

By Lemma \ref{lem:threeone}, the first invariant is  given by two invariants, the image of the fundamental class in $H_6(K(H,2))$ and the image of $Dp_1(N)$ in $H_2(K(H,2))$. The image of the fundamental class in $H_6(K(H,2))$ is (by the cohomological structure of $K(H,2)$) equivalent to the triple product $g^*(x) \cup g^*(y) \cup g^*(z)$ for classes $x,y,z$ in $H^2(K(H,2))$.   But these products vanish for $M \times F$ with $g= g_{M\times F}$, and for $N$ with $g= g_N$, since $\phi$ is an isometry of the cohomology rings and $(g_N)^* = \phi \circ (g_{M\times F})^*$. The image of $Dp_1(N)$ in $H_2(K(H,2))$ is determined by the products $g^*(x) \cup p_1$ for all $x \in H^2(K(H,2))$ and vanishes for $M \times F$ and for $N$  by Condition \ref{three}. 

Thus we are left with the invariant in $h_4(pt)$. Let $Q \subset N$ be the transversal preimage of a regular value of the map $u\colon  N \to F $. By Lemma \ref{lem:threeone}, bordism classes in $h_4(pt)$ are determined by the signature of the underlying $4$-manifold, and  the image of the fundamental class $[Q]$ in $H_4(K(H,2)$. For a class  $\beta \in H^4(N)$ we have the adjunction formula
$$
\la u^*([F]) \cup \beta, [N]\ra = \la i^*(\beta), [Q] \ra,
$$
where $i\colon Q \to N$ is the inclusion.
Applying this to $\beta = p_1(N)$  we obtain:
$$
\la p_1(N) \cup u^*([F]) , [N]\ra = \la p_1(Q),[Q] \ra,
$$
since the normal bundle of $Q$ is trivial.
The signature theorem for $Q$ and Condition \ref{three} (iv) imply that 
$$
\la p_1(N) \cup u^*([F]) , [N]\ra = 3 \sign (Q) = 3\sign(M),
$$
proving the equality for the first invariant in $h_4(pt)$. 

For the second invariant we note that the image of the fundamental class of $Q$ in  $H_4(K(H,2)$ is determined by the numbers 
$$
\la i^*g^*(x)\cup i^*g^*(y), [Q] \ra.
$$
We apply again the adjunction formula for  $\beta = g^*(x) \cup g^*(y)$, and get
$$
\la g^*(x) \cup g^*(y) \cup u^*([F])  , [N]\ra = \la i^*g^*(x)\cup i^*g^*(y), [Q] \ra,
$$
where $g= g_N$. A similar formula holds for $M \times F$ and $g=g_{M\times F}$. The left side agrees for $N$ and $M \times F$, since $\phi$ is an isomorphism of the cohomology ring. Thus also the second invariant for the element in $h_4(pt)$ agrees. 
Summarizing, we have shown: 

\begin {proposition}\label{prop:bordismclasses} If the conditions of Theorem A  are fulfilled,  then the bordism classes  
$$[N, c_N] = [M\times F, c_{M\times F}] \in \Omega_6(B, \xi),$$ for  the normal $2$-smoothings on $N$ and $M \times F$
constructed in Lemma \textup{\ref{lem:2smoothings}}. 
\end {proposition}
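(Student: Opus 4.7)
The plan is to use the injection
$$\Omega_6(B,\xi) \hookrightarrow \bbZ \oplus H_6(K) \oplus H_4(K) \oplus H_2(K)$$
stated at the beginning of the section, together with the Mayer--Vietoris reduction \eqref{eqn:threeone}, to reduce the equality of bordism classes to the equality of finitely many explicit invariants: the signature, and three images under the reference maps to $B$. The Mayer--Vietoris step further reduces the question to comparing invariants in $h_6(pt)$ and $h_4(pt)$, the latter obtained as the transverse preimage of a regular value of the projection to $F$. Lemma~\ref{lem:threeone} then tells me exactly which invariants detect the classes on each side.

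For the $h_6(pt)$ piece, Lemma~\ref{lem:threeone} says the class is pinned down by the image of the fundamental class in $H_6(K)$ and the image of $Dp_1$ in $H_2(K)$. Since $K=K(H,2)=(\CP^\infty)^r$, the first is detected by triple cup products of degree $2$ classes pulled back along $g_{(-)}$, and the second by pairings $\la g^*(x)\cup p_1,[\,\cdot\,]\ra$ for $x\in H^2(K)$. Both comparisons between $N$ and $M\times F$ then reduce to three ingredients: the relation $g_N^* = \phi\circ g_{M\times F}^*$ baked into the construction of the normal $2$-smoothings (Lemma~\ref{lem:2smoothings}), the assumption that $\phi$ is a ring isomorphism with $\phi([M]\times[F])=[N]$, and Condition~\ref{three}~(iv), which governs pairings of $p_1$ with degree $2$ classes pulled back from $K$.

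For the $h_4(pt)$ piece, I would take $Q\subset N$ to be the transverse preimage of a regular value of $u$; on the product side the analogous preimage is $M$ itself. By Lemma~\ref{lem:threeone} the class is detected by $\sign(Q)$ and the image of $[Q]$ in $H_4(K)$. For the signature I would apply the adjunction formula
$$\la u^*([F])\cup \beta, [N]\ra = \la i^*\beta, [Q]\ra,\qquad i\colon Q\hookrightarrow N,$$
to $\beta = p_1(N)$; triviality of the normal bundle of $Q$ combined with Hirzebruch's signature theorem and Condition~\ref{three}~(iv) yields $3\sign(Q) = 3\sign(M)$. For the image of the fundamental class of $Q$ I would apply the same adjunction formula to $\beta = g^*(x)\cup g^*(y)$, then compare with the corresponding expression on $M\times F$ using once more that $\phi$ is a ring isomorphism intertwining $g_N$ and $g_{M\times F}$ in cohomology.

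I expect the only real subtlety to be organizational: matching up ``the same'' invariant evaluated with two different reference maps into $B$, and keeping track of the two cases (spin versus non-spin) where the twist by $L_v$ enters. Once one trusts the compatibility $g_N^* = \phi\circ g_{M\times F}^*$, every invariant check is a direct consequence of either the ring-isomorphism hypothesis on $\phi$ or of the numerical Condition~\ref{three}~(iv); no further geometric input is needed.
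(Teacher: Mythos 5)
Your proposal follows essentially the same route as the paper: reduce via the sequence (\ref{eqn:threeone}) and Lemma \ref{lem:threeone} to comparing the invariants in $h_6(pt)$ (triple products $g^*(x)\cup g^*(y)\cup g^*(z)$ and the products $g^*(x)\cup p_1$, handled by the ring isomorphism $\phi$, the compatibility $g_N^*=\phi\circ g_{M\times F}^*$, and Condition \ref{three}) and in $h_4(pt)$ (signature and image of $[Q]$ via the adjunction formula, the triviality of the normal bundle of $Q$, the signature theorem, and Condition \ref{three}~(iv)). This matches the paper's argument step for step, so no further comment is needed.
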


\section {The proof of Theorem A}

 We consider $N$ and $M \times F$ equipped with normal $2$-smoothings compatible with $\alpha$ and $\phi$. 
 By  Proposition \ref{prop:bordismclasses},  the corresponding bordism classes are equal. Choose a $B$-bordism $W$ between these two normal $2$-smoothings. Since the Euler characteristics of $N$ and $M \times F$ agree, there is an obstruction $\theta(W) \in l_7(\pi_1(F))$ which is elementary if and only if $W$ is $B$-bordant to an s-cobordism. We first note that the Whitehead group for $\pi_1(F)$ vanishes by a result of Farrell-Hsiang \cite{farrell-hsiang1a}, so that we can ignore decorations in the $l$-monoids and $L$-groups. Next we note that  the intersection form  on $\pi_3(M\times F)\cong \pi_3(M)$  with values in the group ring 
vanishes identically (since $\Hom_{\bbZ G}(\bbZ, \bbZ G) = 0$ for $G$ an infinite group). 
 By \cite[Proposition 8, p.~739]{kreck3},  this implies that $\theta (W)$ sits in the ordinary $L$-group $L_7(\pi_1(F))$. But by Cappell \cite[Theorem 18]{cappell.s.e.1973.1}, 
  there is a closed $7$-manifold with $B$-structure so that after taking the disjoint union of $W$ with this manifold the obstruction in $L_7(\pi_1(F))$ vanishes. This completes the proof.


\providecommand{\bysame}{\leavevmode\hbox to3em{\hrulefill}\thinspace}
\providecommand{\MR}{\relax\ifhmode\unskip\space\fi MR }
\providecommand{\MRhref}[2]{%
  \href{http://www.ams.org/mathscinet-getitem?mr=#1}{#2}
}
\providecommand{\href}[2]{#2}

\end{document}